\newtheorem{thm}{Theorem}[section]
\newtheorem{cor}[thm]{Corollary}
\newtheorem{lem}[thm]{Lemma}
\newtheorem{prop}[thm]{Proposition}
\newtheorem{rem}[thm]{Remark}
\numberwithin{equation}{section}
\newcommand{\bfz}{{\mathbb {Z}}}
\newcommand{\bfq}{{\mathbb {Q}}}
\newcommand{\mubar}{{\overline {\mu}}}
\newcommand{\Z}{\mathbb Z}
\newcommand{\R}{\mathbb R}
\newcommand{\bfr}{\mathbb R}
\newcommand{\cpkk}{{\overline {{\mathbb C}{\mathbb P}^2}}}
\newcommand{\cpk}{{\mathbb {CP}}^2}
\DeclareMathOperator{\pf}{pf}
\DeclareMathOperator{\genus}{genus}
\DeclareMathOperator{\rank}{rk}
\begin{document}

\title{On symplectic caps}

\author{David T. Gay} 
\address{Euclid Lab\\
428 Kimball Rd,\\
Iowa City, IA 52245, USA}

\author{Andr\'{a}s I. Stipsicz}
\address{R{\'e}nyi Institute of Mathematics\\
Re{\'a}ltanoda utca 13--15, Budapest, Hungary\\
e--mail: d.gay@euclidlab.org and stipsicz@math-inst.hu} 


\begin{abstract}
An important class of contact 3--manifolds are those that arise as
links of rational surface singularities with reduced fundamental
cycle. We explicitly describe symplectic caps (concave fillings) of
such contact 3--manifolds. As an application, we present a new
obstruction for such singularities to admit rational homology disk
smoothings.
\end{abstract}
\maketitle

\begin{center}
{\em We dedicate this paper to Oleg Viro on the occasion of his 60th
  birthday.}
\end{center}

\section{Introduction}
Our understanding of topological properties of (weak) symplectic fillings of
certain contact 3--manifolds showed a dramatic improvement in the recent past.
These developments rested on recent results in symplectic topology, most
notably on McDuff's characterization of (closed) rational symplectic
4--manifolds \cite{McDuff}. In order to apply results of McDuff, however,
\emph{symplectic caps} were needed to close up the fillings at hand. The
general results of Eliashberg and Etnyre \cite{Eli, Et} showed that such caps
do exist in general, but these results can be used powerfully only in case a
detailed description of the cap is also available. This was the case, for
example, for lens spaces with their standard contact structures \cite{Lisca},
or for certain 3--manifolds which can be given as links of isolated surface
singularities \cite{BO, BS, OO}.

In the following we will show an explicit construction of symplectic
caps for contact 3--manifolds which can be given as links (with their
Milnor fillable structures) of rational singularities with reduced
fundamental cycle. In topological terms it means that the 3--manifold
can be given as a plumbing of spheres along a negative definite tree,
with the additional assumption that the absolute value of the framing
at each vertex is at least the valency of the vertex.  The
construction of the cap in this case relies on a symplectic handle
attachment along a component of the binding of a compatible open book
decomposition. In the terminology of open book decompositions, our
construction coincides with the cap-off procedure initiated and
further studied by Baldwin \cite{Bald}.

The success of the rational blow--down procedure (initiated by
Fintushel and Stern~\cite{FSrat} and then extended by
J. Park~\cite{Prat}) led to the search for isolated surface
singularities which admit rational homology disk smoothings. Strong
restrictions on the combinatorics of the resolution graph of such a
singularity were found in \cite{SSW}, and by identifying Neumann's
$\mubar$--invariant with a Heegaard Floer theoretic invariant of the
underlying 3--manifold, further obstructions for the existence of such
a smoothing were given in \cite{S}. More recently the question has
been answered for all singularities with starshaped resolution graphs
(in particular, for weighted homogeneous singularities) in \cite{BS},
but the general problem remained open. Motivated by our construction
of a symplectic cap for special types of Milnor fillable contact
3--manifolds, we show examples of surface singularities which pass all
tests provided by \cite{S, SSW} but still do not admit rational
homology disk smoothings.

The paper is organized as follows. In Section~\ref{sec:two} we
describe the symplectic handle attachment which caps off a boundary
component of a compatible open book
decomposition. Section~\ref{sec:three} is devoted to the detailed
description of the topology of the symplectic cap, and also an example
is worked out. In Section~\ref{sec:four} we show that certain
singularities do not admit rational homology disk smoothings.

{\bf Acknowledgements:} The authors would like to acknowledge support
from the Hungarian-South African Bilateral Project NRF 62124
(ZA-15/2006), and thank Andr\'as N\'emethi for many useful
conversations. The second author was also supported by OTKA T67928.
The authors would also like to thank Chris Wendl for pointing out an important
mistake in the first version of this paper.

\section{Symplectic handle attachments}
\label{sec:two}
Throughout this section suppose that $(Y,\xi )$ is a strongly convex boundary
component of a symplectic $4$--manifold $(X, \omega )$, that $\xi$ is
supported by an open book decomposition with oriented page $\Sigma$, oriented
binding $B = \partial \Sigma$ and monodromy $h$, and that $L$ is a sublink of
$B$. For each component $K$ of $L$, let $\pf(K)$ denote the page--framing of
$K$, the framing induced by the page $\Sigma$. Note that if $Y_L$ is the
result of performing surgery on $Y$ along each component $K$ of $L$ with
framing $\pf(K)$, and if $L \neq B$, then the open book on $Y$ induces a
natural open book on $Y_L$ with page $\Sigma_L$ equal to $\Sigma \cup_L
(\amalg^{|L|} D^2)$, the result of capping off each $K$ with a disk, and with
monodromy equal to $h$ extended by the identity on the $D^2$ caps. (In
\cite{Bald} this construction has been examined from the Heegaard Floer
theoretic point of view.)

If instead, for $|L|=1$ and $L=K$, $Y_K$ is obtained by surgery along $K$ with
framing $\pf(K)\pm 1$, then the open book on $Y$ induces a natural open book
on $Y_K$ with page $\Sigma_K = \Sigma$ and with monodromy $h_K = h \circ
\tau_K^{\mp 1}$, where $\tau_K$ is a right-handed Dehn twist along a circle in
the interior of $\Sigma$ parallel to $K$. In fact, if $K\neq B$ then surgery
with framing $\pf (K)-1$ coincides with Legendrian surgery along a Legendrian
realization of $K$ on the page, hence the 4--dimensional cobordism resulting
from the construction supports a symplectic structure.  In the following two
theorems we extend the existence of such a symplectic structure to the cases
where the surgery coefficients are $\pf (K)$ and $\pf (K)+1$.

In the first case, where the surgery coefficient is $\pf(K)$, we have a rather
technical extra condition in terms of the existence of a closed $1$--form with
certain behavior near $K$. Later we will state one case in which this
condition is always satisfied, but for the moment we leave it technical
because the theorem is most general that way. When we discuss the behavior of
anything near a component $K$ of $B$, we always use oriented coordinates
$(r,\mu,\lambda)$ near $K$ such that $\mu, \lambda \in S^1$ are the meridional
and longitudinal coordinates, respectively, chosen to represent the page
framing. In other words, $\mu^{-1}(\theta)$, for any $\theta \in S^1$, is the
intersection of a page with this coordinate neighborhood, and the closure of
$\lambda^{-1}(\theta)$ is a meridional disk. Also, we assume that
$\partial_{\lambda}$ points in the direction of the orientation of $K$,
oriented as the boundary of the page.

\begin{thm}\label{t:0framed}
  Suppose that $L$ is a sublink of $B$, not equal to $B$, and that $X_L
  \supset X$ is the result of attaching a $2$--handle to $X$ along each
  component $K$ of $L$ with framing $\pf(K)$. Suppose furthermore that there
  exists a closed $1$--form $\alpha_0$ defined on $Y \setminus L$ which, near
  each component $K$ of $L$, has the form $m_K d\mu + l_K d\lambda$ for some
  constants $m_K$ and $l_K$, with $l_K > 0$. (The coordinates
  $(r,\mu,\lambda)$ near $K$ are as described in the preceding paragraph.)
  Then $\omega$ extends to a symplectic form $\omega_L$ on $X_L$ and the new
  boundary $Y_L$ is $\omega_L$--convex. The new contact structure $\xi_L$ is
  supported by the natural open book on $Y_L$ described above.
\end{thm}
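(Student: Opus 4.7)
\emph{Proof plan.}
The plan is to construct the $2$-handle and its symplectic form explicitly as a local model near each $K\subset L$, glue it to $X$, and verify both convexity of $Y_L$ and Giroux compatibility of $\xi_L$ with the cap-off open book. I would work component by component and focus on a single $K$.

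First I would normalize the Liouville data. Strong convexity yields a Liouville primitive $\alpha$ of $\omega$ on a collar $Y\times(-\ep,0]$ of $Y$ in $X$, with $\alpha|_Y$ a contact form supporting $\xi$. In the open book coordinates $(r,\mu,\lambda)$ near $K$ one may normalize to $\alpha=f(r)\,d\lambda+g(r)\,d\mu$ with $f(0)>0$ and $g$ vanishing to second order at $r=0$. Pulling $\alpha_0$ back along the collar projection gives a closed $1$-form on $(Y\setminus L)\times(-\ep,0]$, and since $\alpha_0$ is closed, replacing $\alpha$ by $\alpha+s\alpha_0$ leaves $\omega$ unchanged. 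Near $K$ this reads $(f(r)+sl_K)\,d\lambda+(g(r)+sm_K)\,d\mu$, so the hypothesis $l_K>0$ supplies, for large $s$, a uniformly positive longitudinal component---precisely the ``extra room'' that will make the upcoming interpolation symplectic.

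Next I would construct the model handle. Identify a tubular neighborhood of $K$ in the collared $X$ with a standard chart in which the page-framed $2$-handle is attached as $H\cong D^2\times D^2$, with the core disk having boundary $K$. Endow $H$ with a Liouville form $\alpha_H$ whose vector field is inward across the attaching region $\partial D^2\times D^2$ (matching the boundary germ of $\alpha+s\alpha_0$ coming from $Y$) and outward across the new boundary $D^2\times \partial D^2$. Concretely, this amounts to choosing a pair of radial functions interpolating between the two prescribed boundary germs while keeping $d\alpha_H$ nondegenerate throughout $H$; the positivity $l_K>0$ together with large $s$ is what permits such an interpolation to remain symplectic. After gluing one obtains $\omega_L=d\alpha_H$ extending $\omega$ on all of $X_L$, with the Liouville vector field outward-pointing along $Y_L$, so $Y_L$ is $\omega_L$-convex.

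It remains to identify $\xi_L$ with the cap-off open book. The page fibration of $Y\setminus B$ extends smoothly across the cocore disk $\{0\}\times D^2$ of the handle, which becomes the new page disk capping off $K$; on this disk $\alpha_H$ is a Liouville form with positive $d\alpha_H$. Combined with positivity of $\alpha_H$ on the remaining binding $B\setminus L$ and routine corner rounding, the Giroux compatibility conditions hold for the open book $(\Sigma_L,h)$ on $Y_L$. The main technical obstacle is exactly the construction of $\alpha_H$ on $H$: one must interpolate symplectically between an ``inward'' Liouville regime (matching $\alpha+s\alpha_0$) and an ``outward'' one (pointing across the cocore disk) with prescribed boundary germs. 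This is where the closed $1$-form hypothesis is used crucially: without $l_K>0$ the longitudinal components cannot be reconciled and the interpolation degenerates, so the $2$-handle cannot be made symplectic with both boundaries convex.
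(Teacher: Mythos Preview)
Your overall architecture---normalize $\alpha$ near $K$, build the handle as a local symplectic model, glue, then verify Giroux compatibility---matches the paper's. But the way you deploy $\alpha_0$ is essentially backwards, and this is where the argument would break. You propose to \emph{add} $s\alpha_0$ for large $s$ to get a ``uniformly positive longitudinal component'' that makes room for an interpolation. The trouble is that $\alpha + s\alpha_0$ need not be contact away from $K$: one has $(\alpha + s\alpha_0)\wedge d\alpha = \alpha\wedge d\alpha + s\,\alpha_0\wedge d\alpha$, and the second term has no sign control, so for large $s$ positivity can fail on $Y\setminus\nu(L)$. You would then have no contact form on the new boundary $Y_L$ away from the surgered region. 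Relatedly, your picture of the handle as a Liouville cobordism with vector field ``inward on the attaching region, outward on the new boundary'' is the Weinstein $2$--handle template, which requires a Legendrian attaching circle; here $K$ is transverse (it is the binding), so no such standard model exists and the promised interpolation is not available for free.

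In the paper the closed form $\alpha_0$ plays a different and more global role: it is \emph{subtracted}, not added, and it encodes precisely the difference between the old and new contact forms on the overlap. One first rescales $\alpha$ by a large constant $k$ so that $k\alpha - \alpha_0$ is a positive contact form supported by the open book on $Y\setminus\nu(L)$ (here the $k^2$ term dominates, so positivity is automatic for $k\gg 0$). Then, with an explicit choice of profile functions $f,g$, the neighborhoods $\nu$ of $K$ and $\nu'$ of the belt sphere are embedded into $(\R^4,\omega_0)$ so that $\omega_0|_\nu = d\alpha$ and the induced primitive on $\nu'$ is exactly $\alpha - \alpha_0$ on the overlap. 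Closedness of $\alpha_0$ is what allows this locally forced form $\alpha - \alpha_0$ to extend as a global contact form on $Y_L$, supported by the capped-off open book. So the key step you are missing is not an abstract interpolation but an explicit model (the $\R^4$ embedding) together with the identification of $\alpha_0$ as the exact obstruction class $\alpha'-\alpha$ on the overlap.
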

\begin{proof}
Let $\pi\colon Y \setminus B \rightarrow S^1$ be the fibration
associated to our given open book on $Y$, and let $\pi_L \colon Y_L
\setminus (B \setminus L) \rightarrow S^1$ be the fibration for the
induced open book on $Y_L$.  Let $Z$ be $[-1,0] \times Y$ together
with the $2$--handles attached along $\{0\} \times L \subset \{0\}
\times Y$, and identify $Y$ with $\{0\} \times Y$. Thus $Z$ is a
cobordism from $\{-1\} \times Y$ to $Y_L$ and $Y \cap Y_L$ is nonempty
and is in fact the complement of a neigborhood of $L$ in $Y$. We will
show that there is a symplectic structure $\eta$ on $Z$ which, on
$[-1,0] \times Y$, is equal to the symplectization of a certain
contact form $\alpha$ on $Y$ supported by $(B,\pi)$ and such that
$Y_L$ is $\eta$--convex, with induced contact structure $\xi_L$
supported by the natural open book $(B \setminus L, \pi_L)$ on $Y_L$
described above. This proves the theorem.

As mentioned above, for each component $K$ of $L$ we use coordinates
$(r,\mu,\lambda)$ on a neighborhood $\nu \cong D^2 \times S^1$ of $K$, with
$(r,\mu)$ being polar coordinates on the $D^2$--factor and $\lambda$ being the
$S^1$--coordinate, in such a way that $\mu = \pi|_\nu$. Thus the pages are the
level sets for $\mu$. We will also add now the convention that $r$ is always
parametrized so as to take values in $[0,1+\epsilon]$ for some small positive
$\epsilon$.

Let $\nu'$ be the corresponding neighborhood in $Y_L$ of the belt--sphere for
the $2$--handle $H_K$ which is attached along $K$, with corresponding
coordinates $(r',\mu',\lambda')$, with the natural diffeomorphism from $\nu
\setminus \{r=0\} \rightarrow \nu' \setminus \{r' = 0\}$ given by $r' = r$,
$\mu' = -\lambda$ and $\lambda' = \mu$. Note that $\pi_L|_{\nu'} = \lambda'$,
which is defined on all of $\nu'$.

There are, of course, many different contact structures supported by the given
open book on $Y$, but they are all isotopic, and, up to isotopy, we can always
assume that $\xi$ has the following behavior in each neighborhood $\nu$ of
each component $K$ of $L$:
\begin{enumerate}
\item $\xi$ is $(\mu,\lambda)$--invariant. I.e. there exist functions $F(r)$
  and $G(r)$ such that $\xi$ is spanned by $\partial_r$ and $F(r) \partial_\mu
  + G(r) \partial_\lambda$. We necessarily have $G(0) = 0$ and we will adopt
  the convention that $F(0) > 0$, so that $G'(0)<0$ and thus $G(r) < 0$ for
  $r$ close to $0$.
\item As $r$ ranges from $0$ to $1$, $\xi$ makes a full quarter turn in the
  $(\mu,\lambda)$ plane. In other words, the vector $(F(r),G(r)) \in \R^2$
  goes from $F(0)>0, G(0)=0$ to $F(1)=0, G(1) < 0$, with $F(r)>0$ and $G(r)<0$
  for all $r \in (0,1)$. (We can make this assumption precisely because $L
  \neq B$. One way to see this is to think of the construction of a contact
  structure supported by a given open book as beginning with a Weinstein
  structure on the page. This Weinstein structure comes from a handle
  decomposition of the page, and if we choose a handle decomposition starting
  with collar neighborhoods of the components of $L$ and then adding
  $1$--handles, we will get the desired behavior.)
\end{enumerate}
So now we assume $\xi$ has the form above.

Next we claim that we can find a contact form $\alpha$ for this $\xi$
satisfying certain special properties. To understand the local properties of
$\alpha$ near each $K$, consider Figure~\ref{f:fggraphs}.
\begin{figure}[ht!]
\labellist
\small\hair 2pt
\pinlabel $\sqrt{2 l_K}$ [r] at 58 132
\pinlabel $R_1$ [r] at 58 105
\pinlabel $1$ [t] at 129 62
\pinlabel $f(r)$ [rb] at 156 156
\pinlabel $R_2$ [r] at 370 147
\pinlabel $1$ [t] at 468 61
\pinlabel $g(r)$ [b] at 507 147
\pinlabel $r$ [l] at 200 62
\pinlabel $r$ [l] at 512 62
\endlabellist
\centering \includegraphics[scale=0.65]{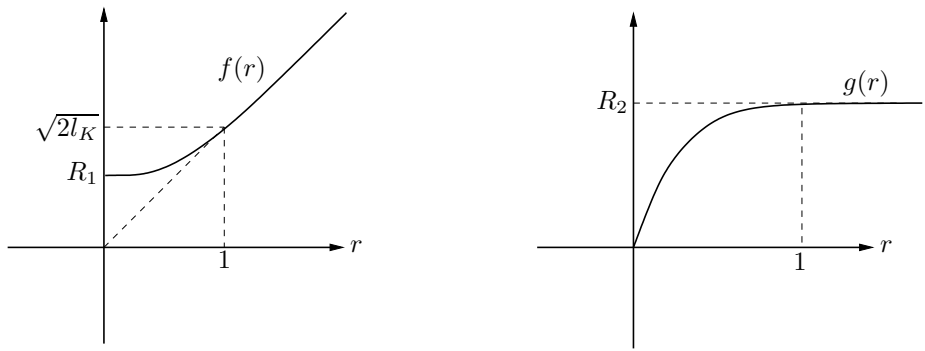}
\caption{Graphs of the functions $f$ and $g$}
\label{f:fggraphs}
\end{figure}
This figure shows graphs of two functions $f$ and $g$, specified by constants
$R_1$, $l_K$ and $R_2$. The properties of $f$ and $g$ are:
\begin{enumerate}
\item The function $f$ is monotone increasing with $f'(0) = 0$
  and $f'(r) > 0$ for $r > 0$.
\item $f(0) = R_1$ and $f(r) = \sqrt{2 l_K} r$ for $r \geq 1$. (Hence $\sqrt{2
    l_K} > R_1$.)
\item $g(0) = 0$. 
\item The function $g$ is monotone increasing with $g'(r) > 0$
  on $[0,1)$.
\item $g(r) = R_2$ for $r \geq 1$.
\end{enumerate}
The claim, then, is that there exists a contact form $\alpha$ for $\xi$ such
that:
\begin{enumerate}
\item The $1$--form $\alpha - \alpha_0$ is a positive contact form on the
  complement of the neighborhoods of radius $r \leq 1$ of each component $K$
  of $L$, and also satisfies the support condition for the given open book
  outside these neighborhoods.
\item For each component $K$ of $L$ there are constants $R_1$ and $R_2$ and
  associated functions $f$ and $g$, as in Figure~\ref{f:fggraphs} (with the
  constant $l_K$ coming from $\alpha_0 = m_K d\mu + l_K d\lambda$), with
  $\frac{1}{2}R_2^2 > m_K$, such that, in the neighborhood $\nu$ of $K$,
  $\alpha$ has the form:
  \[ \alpha = \frac{1}{2}g(r)^2 d\mu + (l_K - \frac{1}{2} f(r)^2) d\lambda \]
  (We might need to reparametrize the coordinate $r$, but only via a
  repara\-metrization fixing $0$ and $1$.)
\end{enumerate}
The condition $\frac{1}{2} R_2^2 > m_K$ is necessary to guarantee that $\alpha
- \alpha_0$ is positive contact when $r \geq 1$, and will also be used later.

To verify this claim, first choose any contact form $\alpha'$ for $\xi$
satisfying the support condition for the given open book. Now note that, for
any suitably large constant $k > 0$, $k \alpha' - \alpha_0$ is a positive
contact form satisfying the support condition. We know that, in $\nu$, $k
\alpha' = -G(r) d\mu + F(r) d\lambda$ for functions $F(r),G(r)$ such that the
vector $(F(r),G(r))$ makes one quarter turn through the fourth quadrant, as
$r$ goes from $0$ to $1$. Because $k$ is large we may assume that $G(1) <
-m_K$. We can then scale $k \alpha'$ by a positive function $\phi(r)$
supported inside $r \leq 1+\epsilon$ so as to arrange that the pair of
functions $(\tilde{F}(r)=\phi(r) F(r), \tilde{G}(r)=\phi(r) G(r))$ has the
appropriate shape and then we let $\frac{1}{2} g(r)^2 = -\tilde{G}(r)$ and
$l_K - \frac{1}{2}f(r)^2 = \tilde{F}(r)$. Then we have $\alpha = \phi(r) k
\alpha'$.

Now embed $\nu$ and $\nu'$ in $\R^4$ as follows, using polar coordinates
$(r_1,\theta_1,r_2,\theta_2)$ on $\R^4$: The embedding of $\nu$ is given by
$(r_1 = f(r), \theta_1 = -\lambda, r_2 = g(r), \theta_2 = \mu)$. The embedding
of $\nu'$ is given by $(r_1 = \sqrt{2 l_K} r', \theta_1 = \mu', r_2 = R_2,
\theta_2 = \lambda')$. This is illustrated in Figure~\ref{f:embeddingsH},
which also shows that the region between $\nu$ and $\nu'$ is precisely our
$2$--handle $H$ attached along $K$ with framing $\pf(K)$. The overlap $\nu
\cap \nu'$ is the set $\{r_1 \geq \sqrt{2 l_K}, r_2 = R_2\}$, which in
$\nu$--coordinates is $\{r \geq 1\}$ and in $\nu'$--coordinates is $\{r' \geq
1\}$.
\begin{figure}[ht!]
\labellist
\small\hair 2pt
\pinlabel $H$ at 191 130
\pinlabel $K$ [bl] at 245 101
\pinlabel $K$ [br] at 94 101
\pinlabel $\nu'$ [b] at 121 158
\pinlabel $\nu'$ [t] at 205 44
\pinlabel $R_2$ [br] at 171 158
\pinlabel $R_1$ [t] at 242 44
\pinlabel $\sqrt{2 l_K}$ [t] at 304 44
\pinlabel $\nu$ [r] at 87 68
\pinlabel $\nu$ [l] at 254 134
\pinlabel $(r_2,\theta_2)$ [B] at 171 231 
\pinlabel $(r_1,\theta_1)$ [l] at 343 101
\endlabellist
\centering
\centering
\includegraphics[scale=0.85]{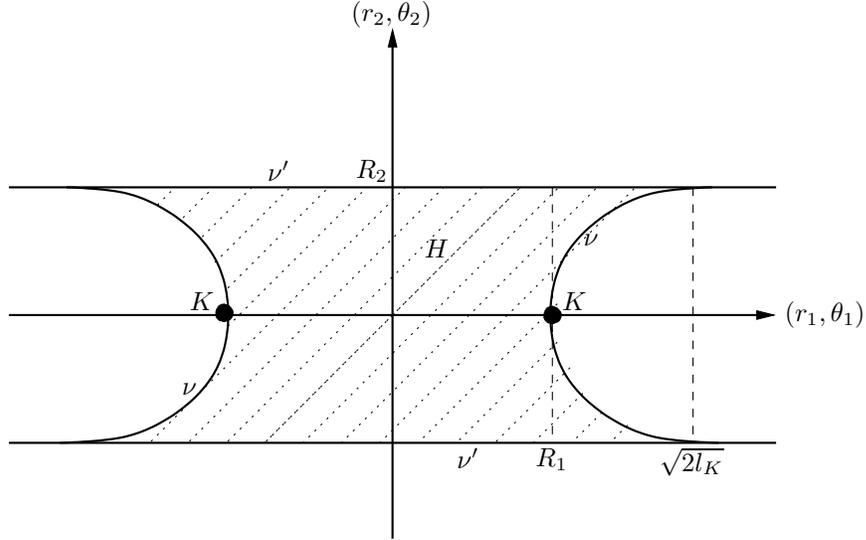}
\caption{Embeddings of $\nu$, $\nu'$ and $H$ into $\R^4$}
\label{f:embeddingsH}
\end{figure}

Consider the standard symplectic form $\omega_0 = r_1 dr_1 d\theta_1 + r_2
dr_2 d\theta_2$ on $\R^4$. Note that $\omega_0 |_\nu = g g' dr d\mu - f f' dr
d\lambda = d\alpha$, so that $H$ equipped with this symplectic form can be
glued symplectically to $[-1,0] \times Y$ with the symplectization of
$\alpha$. Next note that $\omega_0 |_{\nu'} = 2 l_K r' dr' d\mu' = d\alpha'$,
where $\alpha' = \frac{1}{2} (\sqrt{2 l_K} r')^2 d\mu' + (\frac{1}{2} R_2^2 -
m_K) d\lambda'$. (Here we see that $\frac{1}{2} R_2^2 > m_K$ is necessary for
$\alpha'$ to be a positive contact form and to be supported by the open book
inside this neighborhood $\nu'$.) On the overlap $\nu \cap \nu' \subset \R^4$,
using the coordinates $(r,\mu,\lambda)$ from $\nu$, we see that $\alpha' =
(\frac{1}{2} R_2^2 - m_K) d\mu + (-\frac{1}{2} (\sqrt{2 l_K} r)^2) d\lambda =
\alpha - \alpha_0$. Thus we see that $\alpha'$ extends to the rest of $Y_L$ as
$\alpha - \alpha_0$, concluding the proof of the theorem. 
\end{proof}

In fact, 2--handles can be attached with framing $\pf (K)+1$ to boundary
components of a compatible open book, and the symplectic structure will still
extend. In this case, however, the convex boundary will become concave. More
precisely:

\begin{thm}\label{t:1framed}
  Suppose that $K=B$ and that $X_K \supset X$ is the result of attaching a
  $2$-handle $H$ to $X$ along $K$ with framing $\pf(K)+1$. Then $\omega $
  extends to a symplectic form $\omega_K$ on $X_K$ and the new boundary $Y_K$
  is $\omega_K$--concave. The new (negative) contact structure $\xi_K$ is
  supported by the natural open book on $Y_K$ described above.
\end{thm}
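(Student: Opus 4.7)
My plan is to adapt the proof of Theorem~\ref{t:0framed}, changing the local model so that the $2$--handle is attached with framing $\pf(K)+1$ rather than $\pf(K)$, and so that the new boundary $Y_K$ becomes concave rather than convex. Since $K=B$, we have no remaining binding to worry about on the sides, no auxiliary closed $1$--form $\alpha_0$ is needed, and we can work entirely within a single standard neighborhood $\nu\cong D^2\times S^1$ of $K$.

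First, I would choose a contact form $\alpha$ for $\xi$ supported by the open book $(B,\pi)$ whose local form in $\nu$ is the standard binding profile $\alpha=F(r)\,d\mu+G(r)\,d\lambda$ with $F(0)>0$, $G(0)=0$ (the quarter-turn condition used in the proof of Theorem~\ref{t:0framed} is unavailable here, since $L=B$, but that condition is no longer required, as we only need the form to model a binding neighborhood). I would then embed $\nu$ and a neighborhood $\nu'$ of the belt-sphere of the new handle into $(\bfr^4,\omega_0)$ using profile functions analogous to $f,g$ of Figure~\ref{f:fggraphs}, chosen so that $\omega_0|_\nu=d\alpha$ just as in the proof of Theorem~\ref{t:0framed}.

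To realize the framing $\pf(K)+1$ rather than $\pf(K)$, the overlap identification of the two coordinate systems is modified by inserting one extra meridional twist relative to the map $r'=r$, $\mu'=-\lambda$, $\lambda'=\mu$ used in Theorem~\ref{t:0framed}. On the $\nu'$ side, $\omega_0$ again has an explicit primitive $\alpha''$ whose restriction to the overlap differs from $\alpha$ by an exact $1$--form, so $\omega_0$ and the symplectization of $\alpha$ glue to a symplectic form $\omega_K$ on $X_K$. The crucial difference from Theorem~\ref{t:0framed} is that under the $+1$--twisted gluing, the standard Liouville vector field $\tfrac{1}{2}(r_1\partial_{r_1}+r_2\partial_{r_2})$ now points \emph{into} $X_K$ along the new boundary $Y_K$, giving $\omega_K$--concavity and equipping $Y_K$ with a negative contact structure $\xi_K$. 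Tracking a reference page through the handle, the extra meridional shear introduces precisely one left-handed Dehn twist along a curve in the interior of $\Sigma$ parallel to $K$, so $\xi_K$ is supported by the open book with page $\Sigma_K=\Sigma$ and monodromy $h_K=h\circ\tau_K^{-1}$, as desired.

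The main obstacle will be the sign-sensitive bookkeeping: verifying that the Liouville field really points inward rather than outward at $Y_K$ (so that the boundary is genuinely concave rather than convex), and that the induced open book picks up a \emph{left}-handed rather than a right-handed Dehn twist. These are exactly the orientation-sensitive conclusions that distinguish the $\pf(K)+1$ case from the Legendrian surgery case of framing $\pf(K)-1$, and they have to be pinned down by carefully choosing the functions $f,g$ and the embedding of $\nu'$ so that the profile in the $(r_1,r_2)$--plane bends in the opposite direction from the convex model of Theorem~\ref{t:0framed}. Once those two sign checks are in hand, the remainder of the proof mirrors the verification given there.
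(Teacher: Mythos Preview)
Your approach is genuinely different from the paper's: the paper does not carry out any local model construction at all for this theorem, but simply invokes Theorem~1.2 and Addendum~5.1 of~\cite{G}, where concave symplectic $2$--handles along transverse ``nicely fibered'' links were already built. The only work the paper does is to observe that a binding of a supporting open book is nicely fibered, and then to verify from the formula $\alpha_K = k\,d\pi - \alpha$ (given by Addendum~5.1 of~\cite{G}) that the induced negative contact form on $Y_K$ is supported by the stated open book.

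Your plan to build the handle explicitly in $(\bfr^4,\omega_0)$, by analogy with Theorem~\ref{t:0framed}, is in spirit exactly what~\cite{G} does, so it can be made to work; but two steps in your outline are not right as stated and would not survive the ``sign-sensitive bookkeeping'' you anticipate. First, you cannot obtain the $\pf(K)+1$ handle simply by composing the gluing map of Theorem~\ref{t:0framed} with one extra meridional twist while keeping the same embedding of $\nu,\nu'$ and the same region $H\subset\bfr^4$: the handle model in Theorem~\ref{t:0framed} has the radial Liouville field transverse outward along \emph{both} $\nu$ and $\nu'$, and a reparametrization of the overlap cannot change that. To get concavity you must change the profile so that $\nu'$ sits where the Liouville field points \emph{inward}; this forces a genuinely different picture in the $(r_1,r_2)$--plane, not the same $H$ with a twisted identification. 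Second, your claim that the primitive $\alpha''$ on $\nu'$ ``differs from $\alpha$ by an exact $1$--form'' contradicts concavity: the correct relation (this is the content of Addendum~5.1 in~\cite{G}) is $\alpha_K = k\,d\pi - \alpha$ on the overlap, so $d\alpha_K = -\,d\alpha$, and it is $\alpha_K + \alpha$, not $\alpha_K - \alpha$, that is closed. This minus sign is exactly what makes $Y_K$ concave rather than convex and is what you must build into the model from the start, not recover at the end.
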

\begin{proof}
  This is Theorem~1.2 in~\cite{G}. However in that paper, which predates
  Giroux's work on open book decompositions, the terminology is slightly
  different. Definition~2.4 of~\cite{G} defines what it means for a transverse
  link $L$ in a contact $3$-manifold $(M,\xi)$ to be ``nicely fibered''. It is
  easy to see that if $L$ is the binding of an open book supporting $\xi$ then
  $L$ is nicely fibered. (The notion of ``nicely fibered'' is more general
  because, in open book language, it allows for ``pages'' whose boundaries
  multiply cover the binding.) Theorem~1.2 in~\cite{G} then says that if we
  attach $2$-handles to all the components of a nicely fibered link in the
  strongly convex boundary of a symplectic $4$-manifold, with framings which
  are more positive than the framings coming from the fibration, then the
  symplectic form extends across the $2$-handles to make the new boundary
  strongly concave. In our case we have a single component and we are
  attaching with framing exactly one more than the framing coming from the
  fibration. Finally, Addendum~5.1 of~\cite{G} characterizes the negative
  contact structure induced on the new boundary as follows: There exists a
  constant $k$ such that $\alpha_K = k d\pi - \alpha$ on the complement of the
  surgery knots. (Here we are identifying $Y \setminus K$ with the complement
  in $Y_K$ of the belt sphere for $H$ in the obvious way.) The constant $k$ is
  simply the appropriate constant so that $\alpha_K$ extends to all of $Y_K$.
  Then $d \pi \wedge d\alpha_K = -d\pi \wedge d\alpha$ which is positive on
  $-Y_K$. Since $d\alpha_K = -d\alpha$, and the Reeb vector field for $\alpha$
  is tangent to the level sets for the radial function $r$ on a neighborhood
  of $K$ (see Definition~2.4 in~\cite{G}), the Reeb vector field for
  $\alpha_K$ is necessarily tangent to the new binding of $Y_K$ and it is not
  hard to check that it points in the correct direction, so that $\alpha_K$ is
  supported by the natural open book on $Y_K$. 
\end{proof}

We have the following application. (For a similar result, see
\cite[Theorem 4']{wendl}.)

\begin{cor}\label{c:closedo}
If the open book on $Y$ is planar (i.e. $\genus(\Sigma) = 0$) then
$(X,\omega)$ embeds in a closed symplectic 4--manifold $(Z,\eta)$
which contains a symplectic $(+1)$--sphere disjoint from $X$.
\end{cor}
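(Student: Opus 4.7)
The plan is to cap off all but one binding component of $B$ using Theorem~\ref{t:0framed}, then apply Theorem~\ref{t:1framed} to the remaining binding component with framing $\pf(K)+1$, and finally close the resulting concave end by gluing in the standard Liouville filling of $(S^3,\xi_{st})$.

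Write $B=K_0\cup K_1\cup\cdots\cup K_n$ and set $L=K_1\cup\cdots\cup K_n$. Assuming the hypotheses of Theorem~\ref{t:0framed} are satisfied for this $L$, the induced open book on $Y_L$ has page $\Sigma_L=\Sigma\cup_L(\amalg_{j=1}^n D^2)$, which is a disk since $\Sigma$ is planar with $n+1$ boundary components, and monodromy isotopic to the identity (as $\mathrm{Diff}(D^2,\partial D^2)$ is contractible). By the Giroux correspondence, $\partial X_L\cong(S^3,\xi_{st})$ with its standard disk-page open book.

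The crux of the argument is the construction of the closed $1$-form $\alpha_0$ on $Y\setminus L$. Every simple closed curve on a planar surface is separating, so each Dehn twist acts trivially on $H_1(\Sigma;\mathbb{Q})$; hence so does the monodromy $h$. The Wang sequence for the page fibration $Y\setminus B\to S^1$ therefore degenerates to the short exact sequence
\[
0\to H_1(\Sigma;\mathbb{Q})\to H_1(Y\setminus B;\mathbb{Q})\to\mathbb{Q}\to 0,
\]
where the quotient $\mathbb{Q}$ records the degree of a loop over the $S^1$ base. Dualizing, I can select a closed $1$-form $\alpha_0$ on $Y\setminus B$ with prescribed periods $\oint_{\mu_0}\alpha_0=0$ and $\oint_{\lambda_j}\alpha_0=1$ for $j=1,\ldots,n$, where the page-framing longitudes $\lambda_1,\ldots,\lambda_n$ form a basis of $H_1(\Sigma;\mathbb{Q})$. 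After modifying $\alpha_0$ by an exact form supported near $L$, it equals $m_j\,d\mu+d\lambda$ on a neighborhood of each $K_j$ for $j\geq 1$, so $l_j=1>0$. Near $K_0$ the vanishing meridional period ensures $\alpha_0$ extends smoothly across $K_0$, yielding the required form on $Y\setminus L$.

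Applying Theorem~\ref{t:0framed} then produces $(X_L,\omega_L)$ with convex boundary $(S^3,\xi_{st})$, and Theorem~\ref{t:1framed} applied to $K=K_0$ (now the entire binding of $\partial X_L$) with framing $\pf(K_0)+1=+1$ produces $(X'',\omega'')$ with concave boundary, whose induced open book has disk page and monodromy $\tau_{K_0}^{-1}\simeq\mathrm{id}$, giving again $(S^3,\xi_{st})$. The core disk $D_h$ of the attached handle and the page disk $D\subset\partial X_L$ glue along $K_0$ into an embedded symplectic sphere $S=D_h\cup D$ of self-intersection $(\pf(K_0)+1)-\pf(K_0)=+1$; by construction $S\subset(X_L\setminus X)\cup H$ is disjoint from $X$. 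Gluing the standard Liouville filling $(B^4,\omega_{st})$ of $(S^3,\xi_{st})$ onto the concave end of $X''$ produces the desired closed symplectic $(Z,\eta)$. The main obstacle is the construction of $\alpha_0$; everything else is an assembly of the two handle-attachment theorems.
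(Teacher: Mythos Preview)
Your overall strategy agrees with the paper's: cap off all but one binding component using Theorem~\ref{t:0framed}, recognize the resulting convex boundary as $(S^3,\xi_{st})$, and close up. Your construction of the closed $1$--form $\alpha_0$ via the Wang sequence is different from the paper's (which isolates this step as a separate Lemma~\ref{l:1form} and builds $\alpha_0$ as $d\sigma$ for a Seifert map $\sigma\colon S^3\setminus L'\to S^1$ after recognizing $Y_L\cong S^3$), but it is correct up to one small detail: the vanishing meridional period at $K_0$ is necessary but not by itself sufficient for the form to extend across $K_0$; you must also modify by an exact form near $K_0$, exactly as you did near the other $K_j$, to put it into the model form $l_0\,d\lambda$ there.

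There is, however, a genuine gap in your construction of the symplectic $(+1)$--sphere. Your sphere $S = D_h \cup D$ is \emph{not} disjoint from $X$: the capped-off page $D = \Sigma_L = \Sigma\cup_L(\amalg D^2)$ contains the original page $\Sigma\subset Y = \partial X$, so $S$ meets $\partial X$ along all of $\Sigma$, contradicting your claim that $S\subset (X_L\setminus X)\cup H$. Moreover, you have not justified that $D_h$ and $D$ glue to a \emph{smooth symplectic} sphere; this would require checking that the core disk of the handle in Theorem~\ref{t:1framed} is symplectic (rather than, say, Lagrangian as in a Weinstein handle) and that its tangent planes match those of $D$ symplectically along $K_0$. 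The paper sidesteps both issues: once $\partial X_L \cong (S^3,\xi_{st})$, it removes a standard Darboux ball from the K\"ahler $\CP^2$ and glues $(X_L,\omega_L)$ in its place; the $(+1)$--sphere is then simply a complex line in $\CP^2$ chosen disjoint from the removed ball, hence automatically symplectic and disjoint from $X_L\supset X$. Your handle-plus-$B^4$ cap is of course diffeomorphic to $\CP^2\setminus B^4$, but locating a symplectic line inside it amounts to identifying it \emph{symplectically} with that model, which is essentially the paper's argument rather than a consequence of yours.
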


As preparation we need the following:
\begin{lem} \label{l:1form} Let $B$ be the (disconnected) binding of a planar
  open book on $Y$, and let $L \subset B$ be the complement of a single
  component of $B$. Then there exists a $1$--form $\alpha_0$ on $Y \setminus
  L$ such that, near each component $K$ of $L$, $\alpha_0$ has the form
  $\alpha_0 = m_K d\mu + l_K d\lambda$, for $l_K > 0$. (The coordinates near
  $K$ are as in Theorem~\ref{t:0framed}, and are determined by the open book.)
\end{lem}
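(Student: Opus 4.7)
The plan is to build $\alpha_0$ first on $Y \setminus B$ and then to check that it extends smoothly across the unique component $K_0 := B \setminus L$. Write $L = K_1 \cup \cdots \cup K_n$, fix positive numbers $l_1, \ldots, l_n$, and put $l_0 := -\sum_{i \ge 1} l_i$, so that the desired boundary periods of a 1--form on the page sum to zero.

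First I would produce a closed 1--form $\beta$ on the page $\Sigma$ equal to $l_i\, d\lambda$ in a collar of each boundary component $K_i$ ($i = 0,1,\ldots,n$). Because $\Sigma$ is planar with $n+1$ boundary components, $H_1(\Sigma;\R)$ is spanned by $[K_1], \ldots, [K_n]$ subject to $\sum_i [K_i] = 0$, and the consistency $\sum_i l_i = 0$ makes such $\beta$ exist. The crucial consequence of planarity is that $H_1(\Sigma;\R)$ is generated by boundary--parallel classes, each pointwise fixed by the monodromy $h$ (as $h$ is the identity near $\partial\Sigma$); hence $h^*$ acts trivially on $H^1(\Sigma;\R)$ and one can write $h^*\beta - \beta = df$ for a smooth function $f$ that is locally constant near $\partial\Sigma$, say $f \equiv c_i$ in a collar of $K_i$. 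After adding an overall constant I may assume $c_0 = 0$.

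Next I would globalize $\beta$ to a closed 1--form $\tilde\alpha$ on the mapping torus $M_h = Y\setminus B$. Pick a bump function $P\colon [0,1] \to \R$ with $\int_0^1 P = 1$ and $P$ vanishing near $0$ and $1$, let $F(t) = \int_0^t P(s)\, ds$, and define
\[
\tilde\alpha = \beta + F(t)\, df + P(t)\, f(x)\, dt
\]
on $\Sigma \times [0,1]$. A direct check shows $d\tilde\alpha = 0$, and the vanishing of $P$ near $\{0,1\}$ together with $\beta + df = h^*\beta$ makes $\tilde\alpha$ descend across the gluing $(x,1) \sim (h(x),0)$ to a closed 1--form on $M_h$. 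In the coordinates of Theorem~\ref{t:0framed}, where $\mu$ plays the role of $t$ near each $K_i$, $\tilde\alpha$ reads $c_i P(\mu)\, d\mu + l_i\, d\lambda$. Setting $G(\mu) = \int_0^\mu (P(s)-1)\, ds$ gives a smooth function on $S^1$ (since $G(0) = G(1) = 0$), and $c_i P(\mu)\, d\mu - c_i\, d\mu = d(c_i G(\mu))$. Subtracting a cutoff version of this exact form near each $K_i$ replaces $\tilde\alpha$ by a cohomologous closed 1--form that equals $c_i\, d\mu + l_i\, d\lambda$ in a smaller neighborhood of each $K_i$.

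Finally, near $K_0$ the form is $l_0\, d\lambda$ because $c_0 = 0$, and $d\lambda$ is a smooth 1--form on the full solid--torus neighborhood of $K_0$ (since $\lambda$ is a smooth $S^1$--coordinate along the core circle). Thus $\tilde\alpha$ extends as a closed 1--form across $K_0$ to the desired $\alpha_0$ on $Y \setminus L$, with $m_{K_i} = c_i$ and $l_{K_i} = l_i > 0$ for $i \ge 1$. The main obstacle is the $h$--invariance bookkeeping: triviality of $h^*$ on $H^1(\Sigma;\R)$ is precisely where the planarity hypothesis enters, and the construction would fail for a page of positive genus.
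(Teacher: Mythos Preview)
Your argument is correct and produces a closed $1$--form with the required local behaviour. The key steps --- finding $\beta$ on the page with prescribed boundary periods (using $\sum l_i = 0$), invoking planarity to get $h^* = \mathrm{id}$ on $H^1(\Sigma;\R)$, and then averaging over the $S^1$--direction to descend to the mapping torus --- are all sound, and the extension across $K_0$ works because $c_0 = 0$ and $d\lambda$ is globally defined on the solid torus.

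This is, however, a genuinely different route from the paper's. The paper observes that page--framed surgery on $L$ caps off all boundary components of the page except one, so the new open book has disk pages and hence $Y_L \cong S^3$. The surgery cores form an oriented link $L' \subset S^3$, which bounds a Seifert surface; the associated map $\sigma\colon S^3 \setminus L' \to S^1$ then gives $\alpha_0 = d\sigma$ on $Y\setminus L = Y_L \setminus L'$, automatically closed and of the form $d\lambda$ near each $K$ (after translating surgery coordinates). Both proofs exploit planarity, but differently: the paper uses that capping off yields a disk (hence $S^3$), whereas you use that boundary--parallel curves generate $H_1(\Sigma;\R)$. The paper's argument is shorter and more geometric; yours is more explicit and gives the freedom to prescribe the constants $l_K$ independently, which the Seifert--surface approach does not immediately provide.
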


\begin{proof}
  Let $Y_L$ be the result of page-framed surgery on $L$, with the
  corresponding oriented link $L' \subset Y_L$ (the cores of the surgeries).
  Note that $Y_L \cong S^3$ because the induced open book on $Y_L$ has disk
  pages. Thus $L'$ is an oriented link in $S^3$ and there exists a map $\sigma
  \colon S^3 \setminus L' \to S^1$ with the closure of each
  $\sigma^{-1}(\theta)$, for each regular value $\theta$, an oriented Seifert
  surface for $L'$. Pull $\sigma$ back to $Y \setminus L = Y_L \setminus L'$
  and let $\alpha_0 = d\sigma$. 
\end{proof}

\begin{proof}[Proof of Corollary~\ref{c:closedo}]
  Let the components of $B$ be $K_1, \ldots, K_n$. Attach $2$--handles to
  $K_1, \ldots, K_{n-1}$ with framings $\pf(K_i)$, as in
  Theorem~\ref{t:0framed}. This gives $(X',\omega') \supset (X,\omega)$ with
  $\omega'$--convex boundary $(Y',\xi')$. Now attach a $2$--handle to $K_n$
  with framing $\pf(K_n)+1$ as in Theorem~\ref{t:1framed}; the resulting
  concave end is $S^3$ with its negative contact structure supported by the
  standard disk open book, i.e. the contact structure is the standard negative
  tight contact structure. Thus we can fill in the concave end with the
  standard symplectic structure on $B^4$. Alternatively, we can note that, on
  $Y'$, the positive contact structure $\xi'$ is supported by an open book
  with page diffeomorphic to a disk. In other words, $Y'$ is diffeomorphic to
  $S^3$ and $\xi'$ is the standard positive tight contact structure on $S^3$.
  Thus we can remove a standard $(B^4,\omega_0)$ from $\cpk$ with its standard
  K\"{a}hler form, and replace $(B^4,\omega_0)$ with $(X',\omega')$ to get
  $(Z,\eta)$. Since there is a symplectic $(+1)$--sphere in $\cpk$ disjoint
  from $B^4$, we end up with a symplectic $(+1)$--sphere in $(Z,\eta)$
  disjoint from $X'$, and hence disjoint from $X$. 
\end{proof}

By~\cite{McDuff} the symplectic 4--manifold $Z$ found in the proof of
Corollary~\ref{c:closedo} is diffeomorphic to a blowup of $\cpk$.  Let $Z'$ be
the result of anti-blowing down the symplectic $(+1)$--sphere in $Z$ (i.e.
$Z'$ is the union of the $4$-manifold $X'$ in the proof of the corollary above
with $B^4$). Then $Z'$ (still containing $X$) is diffeomorphic to the
connected sum of a number of copies of $\cpkk$. Let $D$ be the closure of $Z'
\setminus X$ in $Z'$; we will call this the \emph{dual configuration} (or
\emph{compactification}) for $X$. Thus we get embeddings of the intersection
forms $H_2(X;\Z )$ and $H_2 (D; \Z)$ into a negative definite diagonal
lattice, and therefore both $H_2(X; \Z)$ and $H_2(D;\Z)$ are negative
definite.

\begin{rem} {\rm A very similar compactification has been found by N\'emethi
    and Popescu-Pampu in \cite{NP}, using rather different methods.}
\end{rem}

\section{Examples: rational surface singularities with reduced fundamental 
cycle}
\label{sec:three}

Suppose that $\Gamma$ is a plumbing tree of spheres which is negative
definite, and at each vertex the absolute value of the framing is at least the
number of edges emanating from the vertex. Every negative definite plumbing
graph $\Gamma$ gives rise to a (not necessarily unique) surface singularity,
and the further assumptions on $\Gamma$ ensure that the singularity has
reduced fundamental cycle. According to Laufer's algorithm, for example, this
property implies that the singularity is rational, cf. \cite[Section~3]{S}.
The Milnor fillable contact structure on such a 3--manifold is known to be
compatible with a planar open book decomposition \cite{EtO, EO, Sch}.  A fairly
explicit description of such an open book decomposition can be given by a
construction resting on results of \cite{GaS}. By \cite[Proposition~5.3]{GaS}
the Milnor fillable contact structure is compatible with an open book
decomposition resting on a toric construction (cf. \cite[Section~4]{GaS}), and
therefore by \cite[Proposition~4.2]{GaS} a compatible planar open book can be
explicitly given as follows.

View the tree $\Gamma$ as a planar graph in $\bfr ^2$ and consider the
boundary sphere of an $\epsilon$ neighborhood of it in $\bfr ^3$. Suppose that
$v$ is a vertex of $\Gamma$ with framing $e_v$ and valency $d_v$. Then near
$v$ drill $-e_v-d_v\geq 0$ holes on the sphere. The resulting planar surface
will be the page of the open book decomposition. Consider a parallel circle to
each boundary component, and further curves near each edge, as shown by the
example of Figure~\ref{f:pelda}. The monodromy of the open book decomposition
is simply the product of the right handed Dehn twists defined by all these
curves on the planar surface.
\begin{figure}[ht!]
\centering
\includegraphics[scale=0.65]{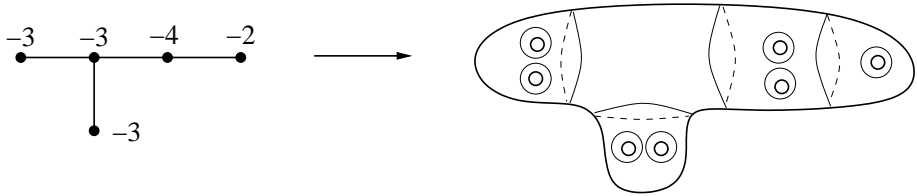}
\caption{Light circles on the punctured sphere define the monodromy of
  the open book}
\label{f:pelda}
\end{figure}

Consider now the Kirby diagram for $Y$ based on the open book decomposition as
follows: regard the planar page as a multipunctured disk. (This step involves
a choice of an 'outer circle'.)  Every hole on the disk defines a 0--framed
unknot linking the boundary of the hole, while the light circles defining the
monodromy through right handed Dehn twists give rise to a $(-1)$--framed
unknots. In fact, the 0--framed unknots can be turned into dotted circles, and
then viewed as 4--dimensional 1--handles (for these notions of Kirby calculus,
see \cite{GS}). These will build up a Lefschetz fibration with fiber
diffeomorphic to the page of the open book, and the addition of the
$(-1)$--framed circles correspond to the vanishing cycles of the Lefschetz
fibration, giving the right monodromy.

Having this Kirby diagram for $Y$, a relative handlebody diagram for the dual
configuration $D$ (built on $-Y$) can be easily deduced by performing
0--surgery along all the boundary circles except the outer one.  This
operation corresponds to capping off all but the last boundary component of
the open book defining the Milnor fillable structure on $Y$. Since after all
the capping off we get an open book with a disk as a page, the 4--manifold $D$
is a cobordism from $-Y$ to $S^3$.

It is usually more convenient to have an absolute handlebody than a relative
one, and since the other boundary component of $D$ is $S^3$, by turning $D$
upside down we can easily derive a handlebody description first for $-D$ and
then, after the reversal of the orientation, for $D$. After appropriate
handleslides, in fact, the diagram for $D$ can be given by a simple algorithm.
Since we only dualize 2--handles, $D$ can be given by attaching 2--handles to
$D^4$.  The framed link can be given by a braid, which is derived from the
plumbing tree by the following inductive procedure. To start, we choose a
vertex $v$ where the strict inequality $-e_v-d_v>0$ holds. (Such a vertex
always exists, for example, we can take a leaf.)  We will choose the outer
circle to be the boundary of one of the holes near $v$.  Now associate to
every inner boundary component a string and to every light circle a box
symbolizing a full negative twist of the strings passing through the box,
which in our case comprise of those strings which correspond to the boundary
components encircled by the light circle.  The framing on a string is given by
the negative of the 'distance' of the boundary component from the outer
circle: this distance is simply the number of light circles we have to cross
when traversing from the boundary component to the outer circle. Another
(obviously equivalent) way of describing the same braid purely in terms of the
graph $\Gamma$ goes as follows: choose again a vertex $v$ with $-e_v-d_v>0$,
and consider $-e_u-d_u$ strings for each vertex $u$, except for $v$ for which
we take only $-e_v-d_v-1$ strings.  Introduce a full negative twist on the
resulting trivial braid (corresponding to the light circle parallel to the
outer circle), and then introduce a further full negative twist for every edge
$e$ in the graph, where the strings affected by the negative twist can be
characterized by the property that they correspond to vertices which are in a
component of $\Gamma - \{ e\}$ not containing the distinguished vertex $v$.
Finally, equip every string corresponding to a vertex $u$ by $r_{uv}-2$ where
$r_{uv}$ is the negative of the minimal number of edges we traverse when
passing from $u$ to $v$.

We will demonstrate this procedure through an explicit family of examples. 
(For a similar result see \cite[Theorem~3]{wahl}.) To
this end, suppose that the graph $\Gamma _n$ is given by Figure~\ref{f:int}.
\begin{figure}[ht]
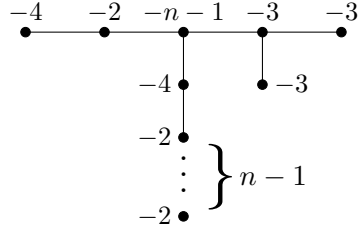

\begin{center}
\setlength{\unitlength}{1mm}
\unitlength=0.7cm
\begin{graph}(10,6)(0,-5)
\graphnodesize{0.2}

 \roundnode{m2}(2,0)
 \roundnode{m3}(3.5,0)
 \roundnode{m4}(5,0)
 \roundnode{m5}(6.5,0)
 \roundnode{m6}(8,0)
 \roundnode{m7}(6.5,-1)  
 \roundnode{m8}(5,-1)
 \roundnode{m9}(5,-2)
 \roundnode{m10}(5,-3.5)

\edge{m3}{m2}
\edge{m3}{m4}
\edge{m4}{m5}
\edge{m6}{m5}
\edge{m7}{m5}
\edge{m8}{m4}
\edge{m9}{m8}

\freetext(5,-2.4){\Large $.$}
\freetext(5,-2.7){\Large $.$}
\freetext(5,-3){\Large $.$}

  \autonodetext{m2}[n]{{\small $-4$}}
  \autonodetext{m3}[n]{{\small $-2$}}
  \autonodetext{m4}[n]{{\small $-n-1$}}
  \autonodetext{m5}[n]{{\small $-3$}}
  \autonodetext{m6}[n]{{\small $-3$}}
  \autonodetext{m7}[e]{{\small $-3$}}
  \autonodetext{m8}[w]{{\small $-4$}}
  \autonodetext{m9}[w]{{\small $-2$}}
  \autonodetext{m10}[w]{{\small $-2$}}

\freetext(5.7,-2.75)
{{\Huge $\rbrace$}}

\freetext(6.7,-2.75){$n-1$}

\end{graph}
\end{center}
\caption{\quad An interesting family of plumbing graphs.}
\label{f:int}
\end{figure}
It is easy to see that the graphs in the family for $n\geq 1$ are all negative
definite, and for $n\geq 2$ define a rational singularity with reduced
fundamental cycle.  Assume that $n\geq 3$ and choose a boundary circle near
the $(-n-1)$--framed vertex to be the outer circle. The page of the planar
open book, together with the light circles (giving rise to the monodromy
through right handed Dehn twists) are pictured by Figure~\ref{f:lap1}
\begin{figure}[ht!]
\centering
\includegraphics[scale=0.65]{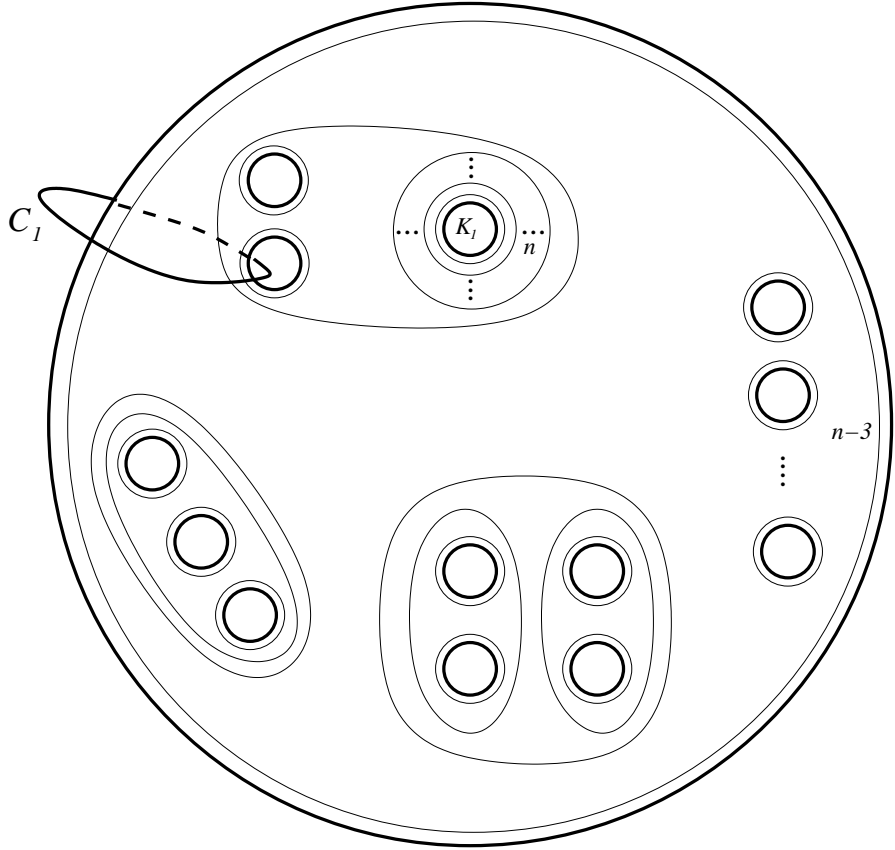}
\caption{The light circles on the disk define the monodromy of the open book.
  There are $n$ concentric light circles around the boundary component
  labelled by $K$ and there are $n-3$ boundary circles on the right hand side
  of the disk. For each of the interior boundary components there should be a
  corresponding unknot $C_i$ linking it and the exterior boundary component;
  here we have only drawn $C_1$.}
\label{f:lap1}
\end{figure}
(with the circle $C_1$ disregarded for a moment).  The 0--framed unknots
originating from the 1--handles of the Lefschetz fibration become unknots
which each link one of the interior boundary components of the punctured disk
once and the exterior boundary once. In the diagram, the unknot labelled $C_1$
is one of these unknots; we have not drawn the rest because they would only
complicate the picture needlessly, but it is important to remember that there
is one such unknot for each interior boundary.  Putting $(-1)$--framings to
all light circles we get a convenient description of $Y$. Now add framing $0$
to all boundary components except the outer one.  The result is a cobordism
$D$ from $-Y$ to $S^3$. Mark all these circles (for example, use the
convention of \cite{GS} by replacing all framing $a$ with $\langle a \rangle$)
and turn $D$ upside down: add 0--framed meridians to the circles corresponding
to the boundary components of the open book (these are the curves along which
we 'capped off' the open book). Now sliding and blowing down marked curves
only, we end up with the diagram of $-D$, and by reversing all crossings and
multiplying all framings by $(-1)$ eventually we get a Kirby diagram for $D$
as it is shown by Figure~\ref{f:d}. (Every box in the diagram means a full
negative twist.)
\begin{figure}[ht!]
\centering
\includegraphics[scale=0.75]{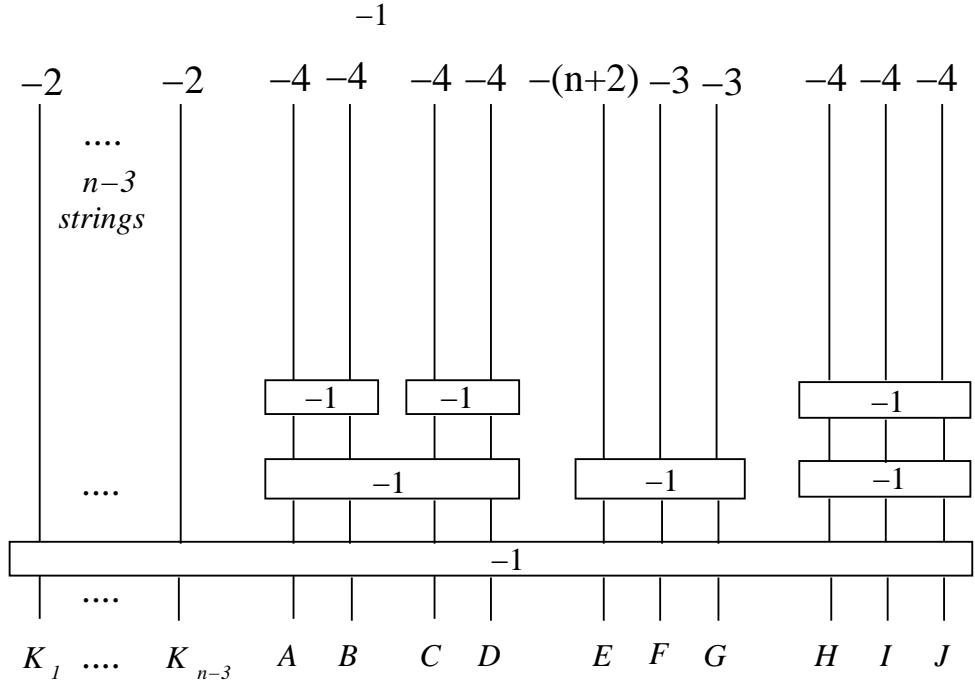}
\caption{Boxes in the diagram mean full negative twists.}
\label{f:d}
\end{figure}

\section{The nonexistence of rational homology disk smoothings}
\label{sec:four}
Next we will demonstrate how the explicit topological description of
the dual $D$ can be applied to study smoothings of surface
singularities.  We start with a simple observation providing an
obstruction for a 3--manifold to bound a rational homology disk,
i.e. a 4--manifold $V$ with $H_* (V; \bfq ) =H_* (D^4;\bfq )$.
\begin{thm}\label{t:forbid}
Suppose that the rational homology 3--sphere $-Y$ is the boundary of a
compact 4--manifold $D$ with the property that $\rank H_2(D; \Z )=n$
and that the intersection form $(H_2(D; \Z ), Q_D)$ does not embed
into the negative definite diagonal lattice $n\langle -1 \rangle $ of
the same rank.  Then $Y$ cannot bound a rational homology disk.
\end{thm}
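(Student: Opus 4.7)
The plan is to derive a contradiction by gluing $D$ to a hypothetical rational homology disk filling of $Y$ and invoking Donaldson's diagonalization theorem. Suppose, for contradiction, that $Y$ bounds a rational homology disk $V$. Since $\partial D = -Y$ and $\partial V = Y$, I would form the closed oriented smooth 4--manifold $W = D \cup V$ by gluing along the common boundary. Because $Y$ is a rational homology 3--sphere and $V$ is a rational homology disk, the Mayer--Vietoris sequence with rational coefficients collapses to give an isomorphism $H_2(D;\Q) \xrightarrow{\sim} H_2(W;\Q)$; integrally, the inclusion $H_2(D;\Z) \to H_2(W;\Z)$ is injective modulo torsion with image a full-rank sublattice of the torsion-free quotient, and by naturality of the intersection pairing, $Q_W$ restricted to this image coincides with $Q_D$.

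Next, assuming $Q_D$ is negative definite (which is the setting of genuine interest, since the embedding condition into $n\langle -1\rangle$ is only restrictive in that case, and negative definiteness is automatic for the dual configurations produced in Corollary~\ref{c:closedo}), the intersection form $Q_W$ on $H_2(W;\Z)/\mathrm{tors} \cong \Z^n$ will also be negative definite. Donaldson's diagonalization theorem then forces $Q_W \cong n\langle -1\rangle$ over $\Z$, producing an isometric embedding $(H_2(D;\Z), Q_D) \hookrightarrow (\Z^n, n\langle -1\rangle)$, in direct contradiction to the hypothesis.

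The main technical point I would need to verify carefully is that the map $H_2(D;\Z) \to H_2(W;\Z)$ really does yield a form-preserving, full-rank inclusion of lattices modulo torsion, and that the intersection form of $W$ genuinely restricts to $Q_D$ on the image; this should be routine from Mayer--Vietoris given the rational acyclicity of $Y$ and $V$, together with standard naturality of the cup product pairing. The deep external input is Donaldson's theorem; all of the hard geometric work has already been carried out in constructing the negative definite dual configuration $D$ via the symplectic cap of Corollary~\ref{c:closedo}, so once the gluing is set up the contradiction is immediate.
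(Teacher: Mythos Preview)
Your approach is essentially identical to the paper's: glue $D$ to a hypothetical rational homology disk $V$, use Mayer--Vietoris together with the rational acyclicity of $V$ and $Y$ to see that $\rank H_2$ is unchanged, and then invoke Donaldson's diagonalization theorem to force an embedding of $(H_2(D;\Z),Q_D)$ into $n\langle -1\rangle$, contradicting the hypothesis. You are in fact slightly more careful than the paper in flagging that negative definiteness of $Q_D$ is being used as an implicit hypothesis; the paper simply asserts that the glued-up manifold is negative definite, which is justified in the intended applications because the dual configurations $D$ produced via Corollary~\ref{c:closedo} are always negative definite.
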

\begin{proof}
  Suppose that such a rational homology disk $V$ exists; then $Z=V\cup _YD$ is a
  closed, negative definite 4--manifold. By Donaldson's Theorem \cite{Don} the
  intersection form of $Z$ is diagonalizable over $\Z$, and by our assumption
  on $V$ we get that $\rank H_2(Z; \Z )=\rank H_2 (D; \Z )=n $. Since $H_2 (D;
  \Z )\subset H_2(Z; \Z )$ does not embed into $n\langle -1\rangle$, we get a
  contradiction, implying the result. 
\end{proof}

Consider now the plumbing graph $\Gamma _n$ of Figure~\ref{f:int},
and denote the corresponding 3--manifold by $Y_n$.
\begin{prop}\label{p:nobou} 
The 3--manifold $Y_n$ does not bound a rational homology disk
4--manifold once $n\geq 7$.
\end{prop}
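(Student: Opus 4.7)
The plan is to invoke Theorem~\ref{t:forbid} by exhibiting the dual configuration $D_n$ constructed in Section~\ref{sec:three} (its handlebody picture is Figure~\ref{f:d}) and showing that its intersection form $Q_{D_n}$ does not embed into the negative definite diagonal lattice of the same rank once $n\geq 7$.

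First, I read off the symmetric matrix $Q_{D_n}$ from the braid description. For each non-distinguished vertex $u$ of $\Gamma_n$ one takes $-e_u-d_u$ strings, and for the distinguished vertex $v$ (say the central vertex with weight $-n-1$ and valency $4$, for which $-e_v-d_v=n-3>0$) one takes $-e_v-d_v-1=n-4$ strings; summing the contributions of the $-4$, $-2$, $-3$, $-3$, $-4$, $-2$ vertices and of the $(-2)$--tail of length $n-1$ yields an explicit rank $r=\rank H_2(D_n;\Z)$ as a linear function of $n$. The diagonal entries of $Q_{D_n}$ are the framings $r_{uv}-2$, and the off-diagonal entries encode one full negative twist around all strings (the outer light circle) together with one full negative twist for each edge $e\in\Gamma_n$, affecting precisely the strings coming from the component of $\Gamma_n\setminus\{e\}$ not containing $v$. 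By construction $Q_{D_n}$ is negative definite, which is also guaranteed by Corollary~\ref{c:closedo} and the paragraph following it.

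Next, assuming for contradiction that $Y_n$ bounds a rational homology disk $V$, Theorem~\ref{t:forbid} would supply integer vectors $\phi(e_i)\in\Z^r$, one for each generator $e_i$ of $H_2(D_n;\Z)$, with $\phi(e_i)\cdot\phi(e_j)=-(Q_{D_n})_{ij}$ for the standard Euclidean pairing on $\Z^r$. The final and hardest step is to rule out the existence of such an assignment $\phi$ whenever $n\geq 7$. The strategy is combinatorial: the generators coming from the $(-2)$--tail produce many short vectors (essentially of Euclidean norm $2$) whose pairwise inner products are tightly prescribed by the graph, and this forces an almost unique pattern of $\pm 1$ coordinate entries. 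Propagating this pattern along the branches coming from the leaves (the $-4$, $-3$, $-3$, $-4$, $-2$ vertices) eventually overdetermines too few ambient coordinates, violating one of the required inner products once the tail has length at least $n-1=6$.

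The main obstacle will be organizing this no-embedding argument cleanly; one expects the threshold $n\geq 7$ to be sharp, with smaller values admitting an embedding. The most transparent presentation is likely to pick an explicit sublattice of $Q_{D_n}$ supported on the $(-2)$--tail together with one generator from each of the three branches, compute its Gram matrix, and check by hand or by a determinant/discriminant computation that it does not embed into any $r\langle -1\rangle$. Once this obstruction is in place, Theorem~\ref{t:forbid} immediately yields the proposition.
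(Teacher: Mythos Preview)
Your high--level plan is exactly the paper's: invoke Theorem~\ref{t:forbid} with the dual $D_n$ of Figure~\ref{f:d} and show that $(H_2(D_n;\Z),Q_{D_n})$ does not embed into the diagonal lattice of the same rank. The paper records this non--embedding separately as Proposition~\ref{p:notembed} and then combines the two statements.

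Where your outline goes wrong is in the description of $Q_{D_n}$ and, consequently, in the combinatorial strategy for the non--embedding. The central $(-n-1)$--vertex has valency $3$, not $4$, so it contributes $-e_v-d_v-1=n-3$ strings (not $n-4$); the paper chooses this vertex as the distinguished one. More importantly, the $(-2)$--tail does \emph{not} supply ``many short vectors'': every interior $(-2)$--vertex of the tail has $-e_u-d_u=0$ and therefore contributes no string at all, while the terminal leaf of the tail contributes a single string whose framing $r_{uv}-2$ is roughly $-(n+2)$, hence a \emph{long} vector. The genuinely short vectors in $D_n$ are the $n-3$ strings $K_1,\ldots,K_{n-3}$ coming from the distinguished vertex itself (framing $-2$, pairwise product $-1$), together with two square--$(-3)$ generators $F,G$ and seven square--$(-4)$ generators $A,B,C,D,H,I,J$ coming from the leaves and the $(-4)$--vertex adjacent to the centre.

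So your proposed combinatorial argument---propagating constraints from the $(-2)$--tail---cannot get started. The paper's actual argument first normalizes $K_i=e_1+e_{10+i}$, which forces every remaining generator to be of the form $e_1+x$ with $x$ supported on $e_2,\ldots,e_{10}$; it then plays the intersection numbers among $F,G$ and the seven square--$(-4)$ classes (in particular the relations $X\cdot Y=-3$ for certain pairs with $X^2=Y^2=-4$) against the nine available coordinates and obtains a contradiction by a short case analysis. If you correct your identification of the short vectors, you will be led to essentially the same argument.
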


\begin{rem}
{\rm Notice that elements of this family pass all the tests provided
  by \cite{SSW} since these graphs are elements of the family
  ${\mathcal {A}}$ of \cite{SSW}: change the framing of the single
  $(-4)$--framed vertex with valency two to $(-1)$ and blow down the
  graph until it becomes the defining graph of the family ${\mathcal
    {A}}$. Also, using the algorithm described, e.g. in \cite{S} it is
  easy to see that $\det \Gamma _n \equiv n$ (mod 2), hence for odd
  $n$ the 3--manifold $Y_n$ admits a unique spin structure. The
  corresponding Wu class can be given by the $(-3)$--framed vertex of
  valency three, the unique $(-4)$--framed vertex on the long chain
  and then every second $(-2)$--framed vertex. A simple count then
  shows that for $n$ odd we have that ${\overline {\mu }}(Y_n)=0$,
  hence the result of \cite{S} provides no obstruction for a rational
  homology disk smoothing. (For the terminology used in the above
  argument, see \cite{S}.)}
\end{rem}
 
\begin{prop}\label{p:notembed}
The lattice determined by the intersection form of the dual $D_n$
given by Figure~\ref{f:d}, for $n \geq 7$, does not embed into the
same rank negative definite diagonal lattice.
\end{prop}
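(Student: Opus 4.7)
The plan is to verify the non--embedding directly, working with the explicit intersection lattice of $D_n$ determined by Figure~\ref{f:d}. Suppose for contradiction that an embedding $\phi : (H_2(D_n;\Z), Q_{D_n}) \hookrightarrow N\langle -1 \rangle$ exists, where $N = \rank H_2(D_n;\Z)$. Writing $\phi(\gamma) = \sum_{i=1}^N a_i^\gamma e_i$ for a chosen orthogonal basis $\{e_i\}$ with $e_i \cdot e_j = -\delta_{ij}$, the problem becomes a combinatorial puzzle in the diagonal lattice.

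First, I would transcribe the intersection form from the braid diagram in Figure~\ref{f:d}: each strand contributes a generator whose self--intersection equals $r_{uv}-2$, where $u$ is the associated vertex of $\Gamma_n$, and whose pairing with any other strand is the signed count of negative--twist boxes that the two strands simultaneously pass through. This makes the full Gram matrix of $D_n$ explicit and singles out several generators of square $-2$ and $-3$ sitting near the low--framed vertices of $\Gamma_n$.

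Next, I would exploit the rigidity of small--square vectors: any generator $\gamma$ with $\gamma \cdot \gamma = -2$ must embed as $\pm(e_i - e_j)$, and a generator of square $-3$ as a rigid $\pm$--combination of three basis vectors (the signs dictated by the off--diagonal data). Propagating these constraints along the three arms of $\Gamma_n$ --- the short $(-2),(-4)$ branch, the $Y$ of three $(-3)$'s, and the $(-4)$ followed by a length--$(n-1)$ chain of $(-2)$'s --- determines the images of these sublattices up to coordinate permutation and sign as explicit $A$--type root sublattices of $N\langle -1 \rangle$.

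The main obstacle, and the source of the $n \geq 7$ threshold, lies in meshing the three rigid arms through the strands emerging from the central $(-n-1)$--vertex. As $n$ grows, the many strands stemming from the central vertex must share coordinates with the already--pinned--down rigid pieces, and I expect a pigeonhole argument to force two such strands to share a basis index in a way that violates their prescribed off--diagonal entry in $Q_{D_n}$. Carrying out this counting carefully --- tracking signs and in particular the high--norm vector(s) coming from the central vertex --- will be the heart of the proof, and I would expect the inequality that finally fails for $n \geq 7$ to be an explicit linear bound in $n$ coming from comparing the number of strands around the central vertex with the dimension available in the diagonal lattice after all rigid constraints have been imposed.
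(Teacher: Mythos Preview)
Your outline is not yet a proof, and more importantly the intuitions guiding it are pointed in the wrong direction in two specific ways.

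First, you misidentify the generators of the lattice of $D_n$. The braid in Figure~\ref{f:d} has $-e_u-d_u$ strands for each vertex $u$ (one fewer at the chosen central vertex), so the interior $(-2)$'s on the long arm, the interior $(-2)$ on the short arm, and the trivalent $(-3)$ all contribute \emph{no} generators. There are no ``$A$--type root sublattices'' coming from the arms. What you actually get is: $(n-3)$ generators $K_1,\ldots,K_{n-3}$ of square $-2$ from the central vertex, all with pairwise product $-1$ (not a chain); two generators $F,G$ of square $-3$; seven generators $A,B,C,D,H,I,J$ of square $-4$; and one generator $E$ of square $-(n+2)$. Every pair of generators has product $-1$, $-2$, or $-3$ according to how many boxes they share.

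Second, the mechanism of the contradiction is essentially the opposite of what you anticipate. The $K_i$'s embed without difficulty as $e_1+e_{10+i}$; they do not create a growing pigeonhole obstruction. Their role is rather to force every other generator $X$ (since $X\cdot K_i=-1$ for all $i$) to contain $e_1$ and to avoid $e_{11},\ldots,e_{n+7}$ --- and this is exactly where $n\geq 7$ enters, since one needs at least four $K_i$'s to prevent a square-$(-4)$ vector from meeting each $K_i$ via the $e_{10+i}$ coordinate instead. Once that is done, the entire problem collapses to a \emph{fixed} finite puzzle: fit $F,G$ (square $-3$) and $A,\ldots,J$ (square $-4$) into $e_1+\{\pm e_2,\ldots,\pm e_{10}\}$ with the prescribed pairwise products. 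The contradiction is then obtained by a short case analysis in those nine coordinates, entirely independent of $n$; there is no ``explicit linear bound in $n$'' to be found. Your plan, as written, would have you searching for the obstruction in the wrong place.
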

\begin{proof}
The labels on the components of the braid in Figure~\ref{f:d} will be
used to represent the corresponding basis elements for the lattice
determined by the intersection form of $D_n$. The rank is $n+7$. Let
$E = \{e_1, \ldots, e_{n+7}\}$ be the standard basis for the negative
definite diagonal lattice of rank $n+7$, so $e_i \cdot e_j =
-\delta_{ij}$. Suppose that the lattice for $D_n$ does embed into the
definite diagonal lattice. Then without loss of generality, since $K_i
\cdot K_i = -2$ and $K_i \cdot K_j =-1$ otherwise, we may assume that
$K_i = e_1 + e_{10+i}$. Furthermore, without loss of generality we may
assume that every other one of the basis elements $A, B, \ldots, J$ is
of the form $e_1 + x$ where $x$ is an expression in $e_2, \ldots,
e_{10}$. Thus each basis element whose square is $-3$ (i.e. $F$ and
$G$) must be of the form $e_1 \pm u \pm v$ where $u$ and $v$ are
distinct elements of the set $\{ e_2, \ldots, e_{10}\}$. Each element
whose square is $-4$ (i.e. $A$, $B$, $C$, $D$, $H$, $I$ and $J$) must
be of the form $e_1 \pm q \pm r \pm s$ where $q$, $r$ and $s$ are
distinct elements of the set $\{e_2, \ldots, e_{10}\}$.

Now we can assume that
$F = e_1 + e_2 + e_3$ and $G = e_1 + e_2 + e_4$
(noting that $F \cdot G = -2$). Then we note that none of the
expressions for $A,B,C,D,H,I$ or $J$ can contain $e_2, e_3$ or $e_4$
for the following reason: For each of $X = A,B,C,D,H,I,J$ there is
another basis element $Y$ from this set such that $X \cdot Y = -3$
while $X \cdot X = Y \cdot Y = -4$. Thus if we write $X = e_1 + \alpha
a + \beta b + \gamma c$ with $a, b, c \in E$ and $\alpha,\beta,\gamma
\in \{-1,1\}$, then $Y$ must be $Y = e_1 + \alpha a + \beta b + \delta
d$, with $d \in E$ and $\delta \in \{-1,1\}$, where $a$, $b$, $c$ and
$d$ are distinct elements from the set $\{ e_2, \ldots, e_{10}\}$. Now
noting that $X \cdot F = X \cdot G = Y \cdot F = Y \cdot G = -1$, we
see that if $a = e_2$ then $b,c,d$ must be in $\{e_3, e_4\}$ which
cannot happen because $b$, $c$ and $d$ must be distinct. Similarly $b$
cannot be $e_2$. If $a = e_3$ then $b$ or $c$ must be $e_2$, but we
have just seen that it cannot be $b$, so $c = e_2$. But the same
argument also shows that $d = e_2$, but $c \neq d$. Similarly we can
rule out $a = e_4$ and also $b = e_3$ and $b = e_4$. But if one of $c$
or $d$ is in the set $\{e_2, e_3, e_4\}$ then one of $a$ or $b$ must
also be, so finally we see that none of them can be.

Thus we can now take $H = e_1 + e_5 + e_6 + e_7$. There are then two
possibilities for $I$ and $J$ (up to relabelling the members of the
sets $\{e_8,e_9,e_{10}\}$ and $\{e_5,e_6,e_7\}$).

{\bf Case I:} $I = e_1+e_5+e_6+e_8$ and $J=e_1+e_5+e_6+e_9$.  In this
case we can see that $A,B,C$ and $D$ cannot contain $e_7$, $e_8$ or
$e_9$. So then the only remaining possibilities are all equivalent
(after changing signs of basis elements in $E$) to
$A=e_1+e_5-e_6+e_{10}$, but then we can not find any candidates for
$B$ which give $A \cdot B = -3$. This rules out Case I.

{\bf Case II:} $I = e_1+e_5+e_7+e_8$ and $J=e_1+e_5+e_6+e_8$. 
To rule out this case, write $A = e_1 + \alpha a + \beta b + \gamma c$,
$a,b,c \in \{ e_5,e_6,e_7,e_8,e_9,e_{10}\}$ and $\alpha, \beta, \gamma
\in \{-1.1\}$. In order to have $A \cdot H = -1$, either $0$ or $2$ of
$a,b,c$ must be in the set $\{e_5,e_6,e_7\}$, but not $1$ or $3$ of
them. Similarly, using $A \cdot I = -1$, either $0$ or $2$ must be in
$\{e_5,e_7,e_8\}$, and using $A \cdot J = -1$, either $0$ or $2$ must
be in $\{e_5,e_6,e_8\}$. If it is $0$ in one of these cases it must be
$0$ for all three, but that leaves only $e_9$ and $e_{10}$ for $a$,
$b$ and $c$, an impossibility. Thus it is $2$ in each case. We cannot
have one of them to be $e_5$, because then we could not have exactly $2$
from all three sets. So we must have $a=e_6$, $b=e_7$, $c=e_8$. But
exactly the same argument holds for $B$, and we can never get $A \cdot
B = -3$. Thus Case II is ruled out, concluding the proof of the
proposition. 
\end{proof}

\begin{proof}[Proof of Proposition~\ref{p:nobou}]
Combine Theorem~\ref{t:forbid} and Proposition~\ref{p:notembed}.
\end{proof}

\begin{cor}
  Suppose that $(S_{\Gamma}, 0)$ is an isolated surface singularity with
  resolution graph given by Figure~\ref{f:int}. If $n \geq 7$, then
  $(S_{\Gamma}, 0)$ admits no rational homology disk smoothing, i.e., it has
  no smoothing $V$ with $H_*(V; \bfz )=H_* (D^4; \bfz )$. 
\end{cor}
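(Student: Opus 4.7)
The proof will be a direct application of Proposition~\ref{p:nobou}, so the plan is essentially to identify the input and invoke the already-proven obstruction. Suppose for contradiction that a smoothing $V$ with $H_*(V;\bfz)=H_*(D^4;\bfz)$ exists. I will recall that the Milnor fiber of a smoothing of an isolated surface singularity is a compact $4$--manifold whose oriented boundary is the link of the singularity. For a resolution graph $\Gamma$ satisfying our negative definiteness hypotheses, this link is diffeomorphic to the plumbed $3$--manifold $Y_\Gamma$; in the case of the graph $\Gamma_n$ of Figure~\ref{f:int} this means $\partial V = Y_n$.

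Next I would pass from integral to rational coefficients: the universal coefficient theorem gives $H_*(V;\bfq)=H_*(V;\bfz)\otimes \bfq = H_*(D^4;\bfq)$, so $V$ is a fortiori a rational homology disk bounded by $Y_n$. This is exactly the hypothesis ruled out by Proposition~\ref{p:nobou} once $n\geq 7$, yielding the desired contradiction.

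The only non-routine point is the identification of the Milnor fiber boundary with $Y_n$, which is a standard fact (the Milnor fiber can be taken to have boundary equal to the link, once an appropriate representative of the singularity germ is fixed), and the passage between integral and rational coefficients, which is immediate. I do not expect any serious obstacle: all the substantive work—constructing the dual configuration $D_n$, analyzing its intersection lattice, and showing the non-embedding into $(n{+}7)\langle -1\rangle$—has already been carried out in Section~\ref{sec:three} and Proposition~\ref{p:notembed}, and the bridge from non-embedding to non-bounding a rational homology disk is Theorem~\ref{t:forbid}.
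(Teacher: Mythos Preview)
Your proposal is correct and matches the paper's approach: the corollary is stated without proof, being an immediate consequence of Proposition~\ref{p:nobou} once one notes that the Milnor fiber of a smoothing has boundary $Y_n$ and that an integral homology disk is a fortiori a rational homology disk. Your write-up simply makes these implicit steps explicit.
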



\begin{thebibliography}{AAA}
\bibitem{Bald}
J. Baldwin,
{\it Capping off open books and the Ozsv\'ath--Szab\'o contact invariant},
arXiv:0901.3797.

\bibitem{BO}
M. Bhupal and K. Ono,
{\it Symplectic fillings of links of quotient surface singularities},
arXiv:0808.3794.

\bibitem{BS}
M. Bhupal and A. Stipsicz,
{\it Weighted homogeneous singularities and rational homology disk smoothings},
arXiv: 0902.2277

\bibitem{Don}
S. Donaldson,
{\it Irrationality and the $h$--cobordism conjecture},
J. Differential Geom. {\bf26} (1987), 141--168.

\bibitem{Eli} Y. Eliashberg, {\it A few remarks about symplectic filling},
  Geom.  Topol.{\bf8} (2004), 277--293.

\bibitem{EtO}
T. Etg\"u and B. Ozbagci,
{\it Explicite horizontal open books on some plumbings},
Internat J. Math. {\bf17} (2006), 1013--1031.

\bibitem{Et}
J. Etnyre,
{\it On symplectic fillings}, Algebr. Geom. Topol. {\bf4} (2004), 73--80.

\bibitem{EO} J. Etnyre and B. Ozbagci, {\it Open books and plumbings}, Int.
  Math. Res. Not. {\bf2006} Art. ID 72710, 17pp.

\bibitem{FSrat}
R. Fintushel and R. Stern,
{\it Rational blowdowns of smooth 4--manifolds}, J. Diff. Geom. {\bf46}
(1997), 181--235.

\bibitem{G}
D. Gay,
{\it Symplectic 2--handles and transverse links},
Trans. Amer. Math. Soc. {\bf354} (2002), 1027--1047.

\bibitem{GaS}
D. Gay and A. Stipsicz,
{\it Symplectic surgeries and normal  surface singularities}, 
Algebr. Geom. Topol., to appear, arXiv:0708.1417

\bibitem{GS}
R. Gompf and A. Stipsicz,
{\it 4--manifolds and Kirby calculus}, 
AMS Grad. Studies in Math. {\bf20}, 1999.

\bibitem{Lisca}
P. Lisca,
{\it On symplectic fillings of lens spaces}, Trans. Amer. Math. Soc. {\bf360}
(2008), 765--799.

\bibitem{Lisca2}
P. Lisca,
{\it Lens spaces, rational balls and the ribbon conjecture}, Geom. Topol.
{\bf11} (2007), 429--472.

\bibitem{McDuff} 
D. McDuff, 
{\it The structure of rational and ruled
symplectic 4-manifolds}, Journal of the Amer. Math. Soc. {\bf 3}
(1990), 679--712; {erratum}, {\bf 5} (1992), \mbox{987--8}.

\bibitem{NP}
A. N\'emethi and P. Popescu-Pampu,
{\it On the Milnor fibers of sandwiched singularities},
arXiv:0906.2920

\bibitem{OO} 
H. Ohta and K. Ono, 
{\it Simple singularities and symplectic fillings}, J. Differential
Geom. {\bf 69} (2005), no. 1, 1--42.

\bibitem{Prat}
J. Park,
{\it Seiberg--Witten invariants of generalized rational blow--downs},
Bull. Austral. Math. Soc. {\bf 56} (1997), 363--384.

\bibitem{Sch}
S. Sch\"onenberger,
{\it Determining symplectic fillings from planar open books},
J. Symplectic Geom. {\bf5} (2007), 19--41.

\bibitem{S} A. Stipsicz, {\it On the ${\overline {\mu }}$--invariant of
    rational surface singularities}, Proc. Amer. Math. Ssoc. {\bf136} (2008),
  3815--3823.

\bibitem{SSW}
A. Stipsicz, Z. Szab\'o and J. Wahl,
{\it Rational blowdowns and smoothings of surface singularities}, 
Journal of Topology {\bf1} (2008), 477--517.

\bibitem{wahl}
J. Wahl,
{\it Construction of QHD smoothings of valency 4 surface singularities},
arXiv:1005.2199

\bibitem{wendl}
C. Wendl,
{\it Non-exact symplectic cobordisms between contact 3-manifolds},
arXiv:1008.2465

\end{thebibliography}
\end{document}